\newtheorem{thm}{Theorem}[section]
\newtheorem{cor}[thm]{Corollary}
\newtheorem{prop}[thm]{Proposition}
\newtheorem{lem}[thm]{Lemma}
\newtheorem{ex}{Example}[section]
\newcommand{\be}{\begin{equation}}
\newcommand{\ee}{\end{equation}}
\newcommand{\ben}{\begin{enumerate}}
\newcommand{\een}{\end{enumerate}}
\newcommand{\beq}{\begin{eqnarray}}
\newcommand{\eeq}{\end{eqnarray}}
\newcommand{\beqn}{\begin{eqnarray*}}
\newcommand{\eeqn}{\end{eqnarray*}}
\author{Nasrin Sadeghzadeh}
\address{Department of Mathematics\\
University of Qom\\
Alghadir Bld\\
Qom\\
Iran.\\
Postal code:3716146611}
\email{nsadeghzadeh@qom.ac.ir}
\author{Najmeh Sajjadi Moghadam}
\address{Department of Mathematics\\
University of Qom\\
Alghadir Bld\\
Qom\\
Iran.\\
Postal code: 3716146611}
\email{nsajjadi1398@gmail.com}
\title[Beyond the Generalized Douglas Weyl spaces]
      {Beyond the Generalized Douglas Weyl spaces}
\begin{document}
\begin{center}
{\small
\textbf{Preprint Notice:} This is the original submitted version (preprint) of the article \\
``Beyond the generalized Douglas–Weyl spaces'' \\
by Nasrin Sadeghzadeh and Najmeh Sajjadi Moghadam. \\[2mm]
The final peer-reviewed version is published in \\
\textit{Publications Mathematicae Debrecen}, Volume 106, Issues 3-4, 2025, pp. 405–417. \\
DOI: \href{https://doi.org/10.5486/PMD.2025.9961}{10.5486/PMD.2025.9961}. \\[1mm]
Please cite the published version when referencing this work.
}
\end{center}
\maketitle
\begin{abstract}
This paper gives new insights into the class of Generalized Douglas Weyl ($GDW$)-metrics. This projective invariant class of Finsler metrics, contains some well-known Finsler metrics such as Douglas, Weyl and $R$-quadratic metrics. Here, some new sub-classes of $GDW$-metrics are constructed and considered as the explicit Finsler metrics. Many illustrative and interesting examples are presented.
\end{abstract}

\textbf{Keywords}:{Douglas metrics, $\bar{D}$-metrics, $GDW$-metrics, $R$-quadratic Finsler metrics, $PR$-quadratic Finsler metrics.}\\
\textbf{2010 Mathematics Subject Classification}: {53B40; 53C60}

\section{Introduction}
Two regular metrics on a manifold $M$ are called projectively related if they have the same geodesics as the point sets. In Physics, a geodesic represents the equation of motion that determines the phenomena of the space. A geodesic curve $c(t)$ in a Finsler space $(M, F)$ is defined by the second order system of differential equations
\be
\frac{d^2 c^i}{dt^2} + 2 G^i (c(t), \dot{c}(t))=0,
\ee
where $G^i$ are local functions on $TM$ given by
\[
G^i(x, y):= \frac{1}{4} g^{il}(x,y) \{ \frac{\partial^2 F^2}{\partial x^k \partial y^l} y^k - \frac{\partial F^2}{\partial x^l}\},\quad for \quad y\in T_xM.
\]
Projective Finsler geometry studies equivalent Finsler metrics on the same manifold with the same geodesics as points. J. Douglas introduces the (projective) Douglas curvature and the (projective) Weyl curvature in \cite{Douglas}. Douglas metrics are Finsler metrics with vanishing Douglas curvature, $D_j{^i}_{kl}=0$. Somewhere around, a Douglas metric is a Finsler metric which is locally projectively equivalent to a Riemannian metric. The class of Douglas metrics contains all Riemannian metrics and the locally projectively flat Finsler metrics. But, there are many Douglas metrics which are not Riemannian.
Moreover, it should be mentioned that there are several Douglas metrics which lack local projective flatness; Indeed, our findings demonstrate
\be\label{LPFinD}
\{Locally\hspace{1mm} Projectively\hspace{1mm}Flat\}\hspace{1mm} \subsetneqq \hspace{1mm}\{Douglas\hspace{1mm} metrics\}.
\ee
There is another well-known projective invariant class of Finsler metrics namely the class of Generalized Douglas-Weyl ($GDW$)-metrics. The Finsler metrics satisfying,
\[
D_j{^i}_{kl|m}y^m=T_{jkl}y^i,
\]
for some tensor $T_{jkl}$, where $D_j{^i}_{kl|m}$ denotes the horizontal covariant derivatives of $D_j{^i}_{kl}$ with respect to the Berwald connection of $F$, are called $GDW$-metrics \cite{Bac}.
While all Douglas metrics are considered $GDW$-metrics, it has been demonstrated that there are specific $GDW$-metrics that do not adhere to the Douglas type. In other words, we show that
\be\label{DinGDW}
\{Locally\hspace{1mm} Projectively\hspace{1mm}Flat\}\hspace{1mm} \subsetneqq \hspace{1mm}\{Douglas\hspace{1mm} metrics\}\hspace{1mm} \subsetneqq \hspace{1mm} \{{GDW-metrics}\}.
\ee
Based on Douglas curvature, a new class of Finsler metrics so called $\bar{D}$-metrics is introduced which includes all the Douglas metrics. A Finsler metric $F$ is called $\bar{D}$-metric if ${D}_j{^i}_{kl|m}-{D}_j{^i}_{km|l}=0$ or equivalently $D_j{^i}_{kl|m}y^m=0$.
It is evident that all Douglas metrics are contained in $\bar{D}$-metrics, but there are also numerous $\bar{D}$-metrics that are not Douglas, as illustrated within this study. It means that
\be\label{DinbarD}
\{Locally\hspace{1mm} Projectively\hspace{1mm} Flat\}\hspace{1mm} \subsetneqq \hspace{1mm}\{Douglas\hspace{1mm} metrics\}\hspace{1mm} \subsetneqq \hspace{1mm} \{\bar{D}-metrics\}.
\ee
It has been proven that the category of $\bar{D}$-metrics is completely contained within the set of $GDW$-metrics, signifying an interesting distinction between them: there are remarkable $GDW$-metrics which do not qualify as $\bar{D}$-metric. That is
\[
\{Locally\hspace{1mm} Projectively\hspace{1mm} Flat\}\hspace{1mm} \subsetneqq \hspace{1mm}\{Douglas\hspace{1mm} metrics\}\hspace{1mm}
\]
\[
\subsetneqq \hspace{1mm} \{\bar{D}-metrics\}\hspace{1mm} \subsetneqq \hspace{1mm} \{GDW-metrics\}.
\]
There are other interesting classes of Finsler metrics which are the subset of the class of $GDW$-metrics. $R$-quadratic and $PR$-quadratic Finsler metrics are some great examples of them. R-quadratic Finsler spaces form a rich class of Finsler spaces which were first introduced by B\'{a}cs\'{o} and Matsumoto and could be considered as a generalization of Berwald metrics \cite{BacMat}. A Finsler metric $(M,F)$ is called R-quadratic if its Riemann curvature $R_y$ is quadratic in $y\in T_{x}M$. In \cite{Rq}, it is proved that every R-quadratic Finsler metric is a $GDW$-metrics.
\\
For a Finsler metric $(M,F)$, the Riemann curvature of a projective spray, which introduced in \cite{Sh2}, is called Projective Riemann curvature ($PR$). The Projective Ricci curvature is defined as the Ricci curvature of the projective spray, too. A Finsler metric $(M,F)$ is called $PR$-quadratic if its Projective Riemann curvature $PR_y$ is quadratic in $y \in T_{x}M$.\\
All the above important classes of Finsler metrics belong to the class of $GDW$-metrics. In this paper, we consider these luminous metrics and find the interesting relations between the subclasses of $GDW$-metrics. In fact, a better and more in-depth understanding of $GDW$-metrics  and their sub-classes is provided.

\section{Preliminaries}
A  Finsler metric  on a manifold $M$ is  a non-negative function $F$ on $TM$ having
the following properties
\ben
\item[(a)] $F$ is
 $C^{\infty}$ on $TM\setminus \{0\}$,
\item[(b)]
$F(\lambda y) =\lambda F(y)$, $\forall \lambda >0$, $\ y\in TM$,
\item[(c)] For each $y\in T_xM$,
the following quadratic form ${\bf g}_y$ on $T_xM$ is positive definite,
\be
{\bf g}_y(u, v):= {1\over 2} \Big [ F^2(y+ s u + tv ) \Big ]\Big |_{s, t=0}, \ \ \ \ \ \ u, v\in T_xM.
\ee
\een
At each point $x\in M$, $F_x:= F|_{T_xM}$ is an Euclidean norm
if and only if ${\bf g}_y$ is independent of $y\in T_xM\setminus\{0\}$. \\
To measure the non-Euclidean feature of $F_x$, define
${\bf C}_y: T_xM \times T_xM \times T_xM \to R$ by
\be
{\bf C}_y(u, v, w):=
{1\over 2} {d \over dt} \Big [ {\bf g}_{y+tw} (u, v) \Big ] \Big |_{t=0}, \ \ \ \ \ \ u, v, w\in T_xM.
\ee
 The family ${\bf C}:=\{{\bf C}_y\}_{y\in TM\setminus \{0\} }$ is called the {\it Cartan torsion}. A curve $c(t)$ is called a {\it geodesic} if it satisfies
\be
{d^2 c^i\over dt^2} + 2 G^i (c(t), \dot{c}(t))=0,
\ee
where $G^i$ are local functions on $TM$ given by
\be
G^i(x, y):= \frac{1}{4} g^{il} \{ \frac{\partial^2 F^2}{\partial x^k \partial y^l} y^k - \frac{\partial F^2}{\partial x^l}\},\quad y\in T_xM, \label{Gi}
\ee
and called the spray coefficients of $F$.
The Riemann curvature $R_y = R{^i}_k \frac{\partial}{\partial x^i}\bigotimes dx^k$ of $F$ is given by
\[
R{^i}_k=2\frac{\partial G^i}{\partial x^k}-\frac{\partial^2 G^i}{\partial x^m \partial y^k}y^m+ 2G^m\frac{\partial^2 G^i}{\partial y^m \partial y^k}-\frac{\partial G^i}{\partial y^m}\frac{\partial G^m}{\partial y^k}.
\]
For the Riemann curvature of Finsler metric $F$ one has \cite{Sh2}
\be\label{Rikl}
R{^i}_{kl}=\frac{1}{3}(R{^i}_{k.l}-R{^i}_{l.k}), \quad and \quad R_j{^i}_{kl}=R{^i}_{kl.j}.
\ee
Here, $"{}_{.k}"$ denotes the differential with respect to $y^k$.\\
$F$ is called a Berwald metric if $G^i(y)$ are quadratic in $y\in T_xM$ for all $x\in M$.
For $y\in T_xM_0$, define
\[
B_y:T_xM \times T_xM \times T_xM\rightarrow T_xM
\]
\[
B_y(u,v,w)=B_j{^i}_{kl}u^j v^k w^l \frac{\partial}{\partial x^i},
\]
where
$
B_j{^i}_{kl}=\frac{\partial^3 G^i}{\partial y^j \partial y^k \partial y^l}.
$
Put
\[
E_y:T_xM \times T_xM \rightarrow \mathbb{R}
\]
\[
E_y(u,v)=E_{jk} u^j v^k,
\]
where $E_{jk}=\frac{1}{2}B_j{^m}_{km}$, $u=u^i\frac{\partial}{\partial x^i}$,
$v=v^i \frac{\partial}{\partial x^i}$ and $w=w^i\frac{\partial}{\partial x^i}$.
$B$ and $E$ are called the Berwald curvature and mean Berwald curvature, respectively. $F$ is called a Berwald metric and Weakly Berwald (WB) metric if $B=0$ and $E=0$, respectively \cite{Sh3}.
The $S$-curvature $S(x,y)$ was introduced as follows \cite{Sh3}
\[
S(x,y)=\frac{d}{dt}[\tau(\gamma(t),\gamma'(t)]_{|t=0},
\]
where $\tau(x, y)$ is the distortion of the metric $F$ and $\gamma(t)$ is the geodesic with $\gamma(0)=x$ and $\gamma'(0)=y$ on $M$. It is considerable that \cite{Sh2}
\be\label{ES}
E_{ij}=\frac{1}{2}S_{.i.j},
\ee
where $.i$ denotes the differential with respect to $y^i$, $\frac{\partial}{\partial y^i}$.
Let
\be\label{Douglas}
D_j{^i}_{kl}=B_j{^i}_{kl}-\frac{1}{n+1}\frac{\partial^3}{\partial y^j \partial y^k \partial y^l}(\frac{\partial G^m}{\partial y^m}y^i).
\ee
It is easy to verify that $D:=D_j{^i}_{kl} dx^j\otimes \frac{\partial}{\partial x^i}\otimes dx^k \otimes dx^l$ is a well-defined tensor on slit tangent bundle $TM_0$.
We call $D$ the Douglas tensor. The Douglas tensor $D$ is a non-Riemannian projective invariant, namely,
if two Finsler metrics $F$ and $\bar{F}$ are projectively equivalent,
\[
G^i=\bar{G^i}+P y^i,
\]
where projective factor $P=P(x,y)$ is positively $y$-homogeneous of degree one, then the Douglas tensor of $F$ is the same as that of $\bar{F}$ \cite{DShen}, \cite{Sh2}.
One could easily show that
\be\label{D2}
D_j{^i}_{kl}=B_j{^i}_{kl}-\frac{2}{n+1}\{E_{jk}\delta^i_l+E_{jl}\delta^i_k+E_{kl}\delta^i_j+E_{jk.l}y^i\}.
\ee
Douglas curvature, ${D_j}^i{}_{kl}$, is a projective invariant constructed from the Berwald curvature.
Finsler metrics with ${D_j}^i{}_{kl}=0$ are called Douglas metrics.
A Finsler metric is called $\bar{D}$-metric if $\bar{D}_j{^i}_{klm}=0$, where
\be
\bar{D}_j{^i}_{klm}=D_j{^i}_{kl|m}-D_j{^i}_{km|l}.
\ee
Clearly, this class of metrics includes all Douglas metrics. However, as the examples presented in previous section, there are lots of $\bar{D}$-metrics which are not of Douglas type (non-trivial $\bar{D}$-metrics).
The metrics with the following condition are called GDW metric which are projective invariant.
\[
{D_j}^i{}_{kl|m}y^m=T_{jkl}y^i,
\]
for some tensors $T_{jkl}$, where ${D_j}^i{}_{kl|m}$ denotes the horizontal derivatives of ${D_j}^i{}_{kl}$ with respect to the Berwald connection of $F$.\\
\begin{lem} \cite{Sh2}
Let $F$ and $\bar{F}$ be two projectively equivalent Finsler metrics on $M$. The Riemann curvatures are related by
\be\label{Rieproj}
\bar{R}{^i}_k=R{^i}_k+E\delta{^i}_k+\tau_k y^i,
\ee
where
\[
E=P^2-P_{|m}y^m, \quad \quad \tau_k=3(P_{|k}-PP_{.k})+E_{.k}.
\]
Here $P_{|k}$ denotes the covariant derivative of projective factor $P$ with respect to $\bar{F}$.\\
\end{lem}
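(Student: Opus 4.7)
The plan is to derive the formula by direct substitution of $\bar{G}^i=G^i+P y^i$ into the definition
\[
\bar{R}{^i}_k=2\partial_{x^k}\bar{G}^i-\partial^2_{x^m y^k}\bar{G}^i\, y^m+2\bar{G}^m\partial^2_{y^m y^k}\bar{G}^i-\partial_{y^m}\bar{G}^i\,\partial_{y^k}\bar{G}^m,
\]
and then recognizing the resulting correction as the claimed combination $E\delta^i_k+\tau_k y^i$. The main tools will be the homogeneity identity $P_{.m}y^m=P$ (since $P$ is positively $1$-homogeneous in $y$) and the definition of the horizontal covariant derivative with respect to $\bar F$, namely $P_{|k}=\partial_{x^k}P-\bar G^m{}_k P_{.m}$ where $\bar G^m{}_k=\partial_{y^k}\bar G^m$.

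First I would record the vertical derivatives of $\bar G^i$:
\[
\partial_{y^k}\bar G^i=\partial_{y^k}G^i+P_{.k}y^i+P\delta^i_k,\qquad
\partial^2_{y^k y^m}\bar G^i=\partial^2_{y^k y^m}G^i+P_{.k.m}y^i+P_{.k}\delta^i_m+P_{.m}\delta^i_k.
\]
Next I would substitute these (and the analogous mixed derivative $\partial^2_{x^m y^k}\bar G^i=\partial^2_{x^m y^k}G^i+P_{.k;m}y^i+P_{;m}\delta^i_k$, where ``$;m$'' denotes $\partial_{x^m}$) into the four terms above. After expansion the purely $G$-terms reassemble $R{^i}_k$, and the remaining $P$-dependent terms split naturally into three groups: multiples of $\delta^i_k$, multiples of $y^i$, and cross terms containing $\partial G$ and $\partial P$. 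The cross terms are precisely what one needs to convert coordinate derivatives of $P$ into covariant derivatives $P_{|k}$ via the formula $\partial_{x^k}P=P_{|k}+\bar G^m{}_k P_{.m}$.

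Then I would collect the $\delta^i_k$-coefficient. Using $P_{.m}y^m=P$, the coordinate derivatives combine into $-P_{|m}y^m+P^2$, which by definition equals $E$. For the $y^i$-coefficient I would collect all terms proportional to $y^i$ (arising from $P_{.k}y^i$, $P_{.k.m}y^i$, etc.), again translating $\partial_{x^k}P$ and $y^m\partial^2_{x^m y^k}P$ to covariant-derivative form. Using that $E$ is $2$-homogeneous so $E_{.k}=2PP_{.k}-P_{|k}-\bigl(P_{|m}y^m\bigr)_{.k}+\cdots$, one recovers exactly $\tau_k=3(P_{|k}-PP_{.k})+E_{.k}$.

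The main obstacle is bookkeeping: keeping the mixed term $\partial^2_{x^m y^k}\bar G^i\,y^m$ and the quadratic combination $\partial_{y^m}\bar G^i\,\partial_{y^k}\bar G^m$ organized so that every stray piece is absorbed into either an $R^i_k$-contribution or into one of the homogeneous blocks $E\delta^i_k$, $\tau_k y^i$, without leaving an uncommitted residue. The identity $P_{.m}y^m=P$ and its differentiated forms (e.g.\ $P_{.m.k}y^m=0$) are what make these residues cancel; the whole proof reduces to applying them at the right moments during the collection of terms.
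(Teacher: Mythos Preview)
The paper does not give its own proof of this lemma: it is stated with the citation \cite{Sh2} and no argument follows. Your proposal --- substituting $\bar G^i=G^i+Py^i$ into the defining formula for $\bar R{^i}_k$, expanding, and reorganizing the $P$-dependent terms into the blocks $E\delta^i_k$ and $\tau_k y^i$ using Euler's identity $P_{.m}y^m=P$ --- is exactly the standard derivation found in the cited reference, and is correct in outline.

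One small point to watch: you correctly note that the statement takes $P_{|k}$ with respect to $\bar F$, i.e.\ $P_{|k}=\partial_{x^k}P-\bar G^m{}_k P_{.m}$. In some sources the convention is to use the connection of $F$ instead; the two differ by $P\,P_{.k}$, and this shift is precisely what is absorbed into the coefficient $3$ in $\tau_k=3(P_{|k}-PP_{.k})+E_{.k}$. As long as you keep the convention consistent throughout the bookkeeping, the cancellations you describe go through.
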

For a spray $G$ on an $n$-dimensional manifold $M$ and given a volume form $dV$ on $M$, we can construct a new spray by
\[
\tilde{G}:=G+\frac{2S}{n+1}Y.
\]
The spray $\tilde{G}$ is called the projective spray of $(G, dV)$. In local coordinates,
\be\label{PSpraylocal}
\widetilde{G}^i=G^i-\frac{S}{n+1}y^i.
\ee
The projective Ricci curvature of $(G, dV)$ is defined as the Ricci curvature of $\tilde{G}$, namely,
\[
PRic_{(G, dV)}:=Ric_{\tilde{G}}.
\]
Then by a simple computation one has
\be\label{PRic}
PRic_{(G,dV)}=Ric+(n-1)\{\frac{S_{|0}}{n+1}+[\frac{S}{n+1}]^2\}.
\ee
where $Ric = Ric_G$ is the Ricci curvature of the spray $G$, $S=S_{(G,dV)}$ is the S-curvature of $(G,dV)$ and $S_{|0}$ is the covariant derivative of $S$  along a geodesic of $G$. It is known that $\tilde{G}$ remains unchanged under a projective change of $G$ with $dV$ fixed, thus $PRic_{(G,dV)}=Ric_{\tilde{G}}$ is a projective invariant of $(G,dV)$. For a Finsler metric $(M,F)$, the Riemann curvature of a projective spray is called projective Riemann curvature,
\[
{PR{^i}_k}_{(G,dV)}= {R{^i}_k}_{\widetilde{G}}.
\]
A Finsler metric $(M,F)$ is called $PR$-quadratic Finsler metric if $PR_j{^i}_{kl.m}=0$.
\section{Generalized Douglas Weyl Finsler metrics}
Douglas tensor is an important projective invariant in Finsler spaces. This significant tensor has been taken into consideration in the subsequent section.\\
Based on Douglas curvature, the class of $GDW$-metrics ($GDW(M)$) has been introduced which defines another well-known projective invariant Finsler metrics \cite{GDWTa}. The subsequent study focuses on the classes of Finsler metrics that are proper subsets of $GDW(M)$. As a special case, $GDW(M)$ contains $R$-quadratic metrics \cite{Rq}. In the following, it is shown that, it contains $PR$-quadratic metrics, too.
\subsection{Douglas curvature of Finsler metrics}
In this section, the Douglas curvature of Finsler metrics are considered. A novel category of Finsler metrics named $\bar{D}$-metrics is introduced by incorporating Douglas curvature, encompassing all Douglas metrics. The class of Douglas metrics encompasses the class of projectively flat Finsler metrics. However, not all Douglas metrics are locally projectively flat, as illustrated by the subsequent instance. In simpler terms, the statement \ref{LPFinD} applies.
\begin{ex}\label{D not PF} \cite{Yang}
Define Riemannian metric $\alpha$ and $1$-form $\beta$ on manifold $M$, by
\[
\widetilde{\alpha}=\eta^{\frac{m}{1-m}}\alpha, \quad \quad \widetilde{\beta}=\eta^{-1}\beta,
\]
for some $\eta=\eta(x)$ and $\widetilde{\beta}$ is parallel with respect to $\widetilde{\alpha}$ where $\widetilde{\alpha}$ and $\widetilde{\beta}$
\[
\widetilde{\alpha}=\sqrt{\frac{|y|^2}{|u|^2}},\quad \quad \widetilde{\beta}=\frac{<x,y>}{|u|^2},
\]
and $u=(u^1(x), ...,u^n(x))$ is a vector satisfying the following
\[
u^i=-2(\lambda+t<f,x>)x^i+t|x|^2 f^i+f^i,
\]
where $t$ is a constant and $f$ is a constant vector satisfying $tf\neq 0$ and $\lambda^2+t|f|^2\neq 0$. Then the m-Kropina metric $F=\alpha^m\beta^{1-m}$ is Douglasian but not locally projectively flat, where $m\neq 0, 1$.

\end{ex}
Every Douglas metric falls into the class of $GDW$-metrics but there are many particular examples that showcases the $GDW$-metrics which does not fit the criteria of being of Douglas type. In essence, the claim \ref{DinGDW} remains true.
\begin{ex} \cite{Osaka}
Put
\[
\Omega=\{(x,y,z) \in R^3 | x^2+y^2+z^2 <1\}, \quad p=(x,y,z) \in \Omega, \quad y=(u,v,w) \in T_p\Omega.
\]
Define the Randers metric $F=\alpha+\beta$ by
\[
\alpha=\frac{\sqrt{(-yu+xv)^2+(u^2+v^2+w^2)(1-x^2-y^2)}}{1-x^2-y^2}, \quad \beta=\frac{-yu+xv}{1-x^2-y^2}.
\]
The above Randers metric has vanishing flag curvature $K=0$ and $S$-curvature $\mathbf{S}=0$. $F$ has zero Weyl curvature then $F$ is of $GDW$ metric. But $\beta$  is not closed then $F$ is not of Douglas type.
\end{ex}
The above example is a $\bar{D}$-metric while it is not a Douglas metric. Because based on \eqref{D2} one has
\[
{D}_j{^i}_{kl|m}-{D}_j{^i}_{km|l}= {B}_j{^i}_{kl|m}-{B}_j{^i}_{km|l}=R_j{^i}_{ml.k}=0.
\]
In fact, every R-quadratic Finsler metric with vanishing $S$-curvature which is not of Douglas type is a non-trivial $\bar{D}$-metric. Although, we can not find these metrics between regular $(\alpha, \beta)$-metric of non-Randers type. Because of the main theorem in \cite{GDWTa} saying that; Let $F$ be a regular $(\alpha, \beta)$-metric of non-Randers type. Then $F$ is a generalized Douglas-Weyl $(GDW)$ metric with vanishing $S$-curvature if and only if it is a Berwald metric, or Theorem 2 in \cite{TaSaPe}\\
The following example presents a $\bar{D}$-metric which is not of Douglas type, too.
\begin{ex} \cite{HuMo}
Consider the following Randers metric defined nearby the origin
\[
F=\frac{\sqrt{|y|^2-(|xQ|^2|y|^2-<y,xQ>^2)}}{1-|xQ|^2}-\frac{<y, xQ>}{1-|xQ|^2},
\]
where $Q=\Big( q{^i}_j\Big)$ is an anti-symmetric matrix. $R{^i}_k = 0$ for $F$ but it is not a Berwald metric when $Q\neq 0$. We have
\[
\frac{\partial b_i}{\partial x^j}=-\frac{q_{ji}}{1-|xQ|^2}-2b_ib_j,
\]
which means that $\beta(x,y)=-\frac{<y, xQ>}{1-|xQ|^2}$ is not closed and then $F$ is not Douglas metric. Moreover, $e_{ij}=0$ which by Lemma 3.1 in \cite{Osaka} one finds that $\mathbf{S}=0$. Then $D_j{^i}_{kl|m}y^m=0$ and $F$ is a non-trivial $\bar{D}$-metric.
\end{ex}
In the following, a $GDW$-metric is presented which is not a $\bar{D}$-metric. In fact, every Finsler metric of (non-zero) constant flag curvature $\lambda$ with vanishing $S$-curvature is a $GDW$-metric which is not $\bar{D}$-metric (Corollary \ref{prop GDW not PRq}).
\begin{ex}\cite{BS}
The family of Randers metrics on $S^3$ constructed by Bao and Shen consists of non-Berwaldian weakly Berwald metrics. Denote generic tangent vectors on $S^3$ as
\[
u\frac{\partial }{\partial x}+ v\frac{\partial }{\partial y}+ z\frac{\partial }{\partial z}.
\]
The Finsler function for Bao-Shen's Randers space is given by
\[
F(x, y, z; u, v, w)=\alpha(x, y, z; u, v, w) + \beta(x, y, z; u, v, w),
\]
with
\[
\alpha=\frac{\sqrt{\lambda(cu-zv+yw)^2+(zu+cv-xw)^2+(-yu+xv+cw)^2}}{1+x^2+y^2+z^2},
\]
\[
\beta=\frac{\pm \sqrt{\lambda-1}(cu-zv+yw)}{1+x^2+y^2+z^2},
\]
where $\lambda > 1$ is a real constant. The above Randers metric has vanishing $S$-curvature and with positive constant flag curvature 1.
\end{ex}

\subsection{$PR$-quadratic Finsler metrics}
Here, $PR$-quadratic metrics as a subclass of $GDW(M)$ are considered.
\begin{thm} \label{Prop PRq=GDW}
Let $(M, F)$ be a Finsler space. $F$ is $PR$-quadratic if and only if
\be\label{PR}
D_j{^i}_{kl|0}=\frac{1}{n+1}(S_{.r}D_j{^i}_{kl})y^i,
\ee
where $S_{.r}$ denotes the differential of $S$-curvature $\mathbf{S}$ with respect to $y^r$.
\end{thm}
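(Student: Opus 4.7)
The plan is to pass to the projective spray $\widetilde{G}^{i}=G^{i}-\tfrac{S}{n+1}y^{i}$, which by construction has vanishing $\widetilde{S}$-curvature, and to reinterpret the $PR$-quadratic condition as an $R$-quadratic condition for $\widetilde{G}$. The stated identity then drops out of comparing the Berwald horizontal derivatives of $G$ and of $\widetilde{G}$.

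First I would identify the Berwald tensor of $\widetilde{G}$ with the Douglas tensor of $F$. Differentiating $\widetilde{G}^{i}$ three times in $y$ and invoking \eqref{D2} (equivalently, combining the projective invariance of $D$ with $\widetilde{E}=0$, which forces $\widetilde{D}=\widetilde{B}$) gives $\widetilde{B}_j{^i}_{kl}=D_j{^i}_{kl}$.

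Next I would compare the two Berwald horizontal derivatives, denoted $|$ for $G$ and $\widetilde{|}$ for $\widetilde{G}$. From the connection-coefficient difference
\[
\widetilde{\Gamma}^{i}_{jk}-\Gamma^{i}_{jk}=-\frac{S_{.j.k}}{n+1}y^{i}-\frac{S_{.j}}{n+1}\delta^{i}_{k}-\frac{S_{.k}}{n+1}\delta^{i}_{j},
\]
the standard formula for the covariant derivative of a $(1,3)$-tensor produces an expression for $D_j{^i}_{kl\widetilde{|}m}-D_j{^i}_{kl|m}$. Contracting with $y^{m}$ and invoking the null contractions $y^{j}D_j{^i}_{kl}=y^{k}D_j{^i}_{kl}=y^{l}D_j{^i}_{kl}=0$ (a direct consequence of \eqref{D2} and the homogeneity of $B$ and $E$), together with the homogeneity $y^{m}D_j{^i}_{kl.m}=-D_j{^i}_{kl}$ and the Euler identities $S_{.m}y^{m}=S$ and $S_{.m.r}y^{m}=0$, nearly every contribution cancels and one is left with the clean identity
\[
D_j{^i}_{kl|0}-D_j{^i}_{kl\widetilde{|}0}=\frac{1}{n+1}S_{.r}D_j{^r}_{kl}y^{i}.
\]
The careful index bookkeeping here is the central computational step.

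Finally, since $PR_{y}=\widetilde{R}_{y}$, the condition $PR_j{^i}_{kl.m}=0$ is exactly the $R$-quadratic condition for the spray $\widetilde{G}$. Adapting the argument that $R$-quadratic implies $GDW$ (see \cite{Rq}) to $\widetilde{G}$, and exploiting $\widetilde{E}=0$ so that $\widetilde{D}=\widetilde{B}$ and the $\widetilde{E}$-derived traces of the would-be right-hand side vanish, the conclusion sharpens to $\widetilde{B}_j{^i}_{kl\widetilde{|}0}=0$, i.e.\ $D_j{^i}_{kl\widetilde{|}0}=0$. Substituting this into the identity of the previous paragraph yields the asserted formula, and each step reverses to give the converse. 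The main obstacle is this last step: one must verify that, under $\widetilde{E}=0$, the conclusion of \cite{Rq} produces the sharper equality $\widetilde{B}_j{^i}_{kl\widetilde{|}0}=0$ rather than merely $\widetilde{B}_j{^i}_{kl\widetilde{|}0}=\widetilde{T}_{jkl}y^{i}$ for some unspecified tensor $\widetilde{T}_{jkl}$.
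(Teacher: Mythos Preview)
Your approach is essentially the paper's: identify the Berwald tensor of the projective spray with the Douglas tensor of $F$, compare the two Berwald horizontal derivatives, and read off the identity after contraction with $y^{m}$. The computations you outline match equations \eqref{h-derivGtild-G}--\eqref{PR Dbar S-cur} of the paper.

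The one place you diverge is the final step, and it is precisely where you flag an obstacle. You route through \cite{Rq} to argue that $R$-quadratic for $\widetilde G$ forces $\widetilde{B}_j{^i}_{kl\widetilde{|}0}=\widetilde{T}_{jkl}y^{i}$, and then try to sharpen this to $\widetilde{T}_{jkl}=0$ using $\widetilde{E}=0$. The paper avoids this detour entirely: it applies the Ricci identity for $\widetilde G$ directly,
\[
\widetilde{B}_j{^i}_{kl\widetilde{|}m}-\widetilde{B}_j{^i}_{km\widetilde{|}l}=\widetilde{R}_j{^i}_{ml.k}=PR_j{^i}_{ml.k},
\]
and contracts with $y^{m}$, using $\widetilde{B}_j{^i}_{km}y^{m}=0$, to get
\[
D_j{^i}_{kl\widetilde{|}0}=\widetilde{B}_j{^i}_{kl\widetilde{|}0}=PR_j{^i}_{ml.k}\,y^{m}.
\]
Thus $PR$-quadratic gives $D_j{^i}_{kl\widetilde{|}0}=0$ on the nose, with no appeal to \cite{Rq} and no need to argue about a residual $\widetilde{T}_{jkl}y^{i}$. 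Your ``main obstacle'' evaporates once you replace the citation with this one-line Ricci identity, which is in any case the content behind the result in \cite{Rq}. Combined with your clean identity $D_j{^i}_{kl|0}-D_j{^i}_{kl\widetilde{|}0}=\tfrac{1}{n+1}S_{.r}D_j{^r}_{kl}\,y^{i}$, this yields the paper's displayed relation
\[
PR_j{^i}_{ml.k}\,y^{m}=D_j{^i}_{kl|0}-\tfrac{1}{n+1}S_{.r}D_j{^r}_{kl}\,y^{i},
\]
from which both directions follow (the converse using that $PR_j{^i}_{ml}$ is recovered from $PR^{i}_{k}$ by the $y$-differentiations in \eqref{Rikl}).
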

\begin{proof}
According to \eqref{PSpraylocal}, for Douglas curvature of $F$, one has
\be\label{h-derivGtild-G}
D_j{^i}_{kl||m}= D_j{^i}_{kl|m} + \frac{1}{n+1}\{S_{.j}D_m{^i}_{kl}+S_{.k}D_j{^i}_{ml}+S_{.l}D_j{^i}_{km}
\ee
\[
+ S_{.m}D_j{^i}_{kl}+SD_j{^i}_{kl.m} - S_{.r}D_j{^r}_{kl}\delta^i_m - S_{.r.m}D_j{^r}_{kl}y^i\},
\]
where $D_j{^i}_{kl||m}$ and $D_j{^i}_{kl|m}$ denote the horizontal derivative of $D_j{^i}_{kl}$ with respect to Berwald connection of $\widetilde{G}^i$ and $G^i$, respectively. Then one has
\be\label{D l-m}
D_j{^i}_{kl||m} - D_j{^i}_{km||l}= D_j{^i}_{kl|m} - D_j{^i}_{km|l}  - \frac{1}{n+1}\{S_{.r}D_j{^r}_{kl}\delta^i_m - S_{.r}D_j{^r}_{km}\delta^i_l
\ee
\[+(S_{.r.m}D_j{^r}_{kl}-S_{.r.l}D_j{^r}_{km})y^i\}.
\]
Based on  \eqref{PSpraylocal} and \eqref{D2} one could easily see that $PB_j{^i}_{kl}=D_j{^i}_{kl}$. It means that
\[
D_j{^i}_{kl||m} - D_j{^i}_{km||l}=PB_j{^i}_{kl||m} - PB_j{^i}_{km||l}.
\]
By applying the above equation and following Ricci identity \cite{Sh2}
\be\label{PRicci identity}
PB_j{^i}_{kl||m}-PB_j{^i}_{km||l}=PR_j{^i}_{ml.k},
\ee
in \eqref{D l-m}, one gets
\be\label{PR Dbar S-cur}
PR_j{^i}_{ml.k}= D_j{^i}_{kl|m} - D_j{^i}_{km|l}  - \frac{1}{n+1}\{S_{.r}D_j{^r}_{kl}\delta^i_m - S_{.r}D_j{^r}_{km}\delta^i_l
\ee
\[
+(S_{.r.m}D_j{^r}_{kl}-S_{.r.l}D_j{^r}_{km})y^i\}.
\]
Contracting the above equation by $y^m$ yields
\[
PR_j{^i}_{ml.k}y^m= D_j{^i}_{kl|0} - \frac{1}{n+1}\Big( S_{.r}D_j{^r}_{kl}\Big) y^i,
\]
which means that $PR_j{^i}_{lm.k}=0$ if and only if $D_j{^i}_{kl|0}=\frac{1}{n+1}\Big( S_{.r}D_j{^r}_{kl}\Big) y^i$.
\end{proof}
\begin{cor}
Every Douglas metric is a $PR$-quadratic Finsler metric.
\end{cor}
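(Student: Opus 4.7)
The plan is to derive the corollary as an immediate consequence of Theorem \ref{Prop PRq=GDW}, which characterizes $PR$-quadratic metrics by the identity
\[
D_j{^i}_{kl|0}=\frac{1}{n+1}\bigl(S_{.r}D_j{^r}_{kl}\bigr) y^i.
\]
Since a Douglas metric is by definition a Finsler metric with vanishing Douglas curvature, $D_j{^i}_{kl}=0$, the strategy is simply to check that both sides of this identity vanish identically.

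For the right-hand side, the factor $D_j{^r}_{kl}$ is identically zero, so the contraction $S_{.r}D_j{^r}_{kl}$ vanishes regardless of the $S$-curvature. For the left-hand side, since $D_j{^i}_{kl}$ is the zero tensor on $TM_0$, every horizontal derivative with respect to the Berwald connection vanishes, and in particular $D_j{^i}_{kl|m}y^m=0$. Therefore the characterizing equation \eqref{PR} is trivially satisfied, and Theorem \ref{Prop PRq=GDW} yields the conclusion that $F$ is $PR$-quadratic.

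There is essentially no obstacle here; the corollary is a one-line deduction from the equivalence established in Theorem \ref{Prop PRq=GDW}, and its role is to confirm the chain of inclusions
\[
\{\text{Douglas metrics}\}\subseteq \{PR\text{-quadratic metrics}\}\subseteq \{GDW\text{-metrics}\},
\]
consistent with the hierarchy advertised in the introduction.
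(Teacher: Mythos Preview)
Your proof is correct and matches the paper's approach: the corollary is stated without proof, being an immediate consequence of Theorem \ref{Prop PRq=GDW} since $D_j{^i}_{kl}=0$ makes both sides of \eqref{PR} vanish identically.
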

\begin{cor}
Every $PR$-quadratic Finsler metric is a $GDW$ metric.
\end{cor}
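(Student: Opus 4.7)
The plan is to deduce this corollary directly from Theorem \ref{Prop PRq=GDW}, which already does all of the heavy lifting. The only real task is to recognize that the characterization of $PR$-quadratic metrics furnished by that theorem has exactly the same shape as the defining condition of a $GDW$-metric.

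More precisely, first I would recall the definition: $F$ is a $GDW$-metric provided there exists a tensor $T_{jkl}$ such that
\[
D_j{}^i{}_{kl|m}\, y^m = T_{jkl}\, y^i.
\]
Contracting the index $m$ with $y^m$ is, by standard notation, just the operation denoted $|0$, so the defining relation can be rewritten as $D_j{}^i{}_{kl|0} = T_{jkl}\, y^i$ for some $(0,3)$-tensor $T_{jkl}$.

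Next, assume that $F$ is $PR$-quadratic. By Theorem \ref{Prop PRq=GDW} this is equivalent to
\[
D_j{}^i{}_{kl|0} = \frac{1}{n+1}\bigl( S_{.r}\, D_j{}^r{}_{kl}\bigr)\, y^i.
\]
Define
\[
T_{jkl} := \frac{1}{n+1}\, S_{.r}\, D_j{}^r{}_{kl}.
\]
This is manifestly a tensor, being built from the Douglas tensor $D_j{}^i{}_{kl}$ and the $y$-derivative of the $S$-curvature, both of which are well-defined tensorial quantities on $TM_0$. With this choice we obtain $D_j{}^i{}_{kl|0} = T_{jkl}\, y^i$, which is precisely the $GDW$ condition.

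There is no genuine obstacle here; the only minor point worth mentioning is the tensoriality of $T_{jkl}$, which is immediate from the construction. Hence every $PR$-quadratic Finsler metric belongs to $GDW(M)$, yielding the inclusion
\[
\{PR\text{-quadratic metrics}\} \subseteq \{GDW\text{-metrics}\}.
\]
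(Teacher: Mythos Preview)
Your argument is correct and is exactly the intended one: the paper states this as an immediate corollary of Theorem~\ref{Prop PRq=GDW}, and the only step is to read off $T_{jkl}=\tfrac{1}{n+1}S_{.r}D_j{^r}_{kl}$ from \eqref{PR}.
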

According to the above corollary, every $PR$-quadratic Finsler metric is a $GDW$ metric while as the following proposition shows that the class of $PR$-quadratic Finsler metrics is a proper of $GDW(M)$.
\begin{prop}\label{prop GDW not PRq}
Every non-Riemannian Finsler metric of constant flag curvature $\lambda \neq 0$ with vanishing $S$-curvature is a $GDW$-metric which is not $PR$-quadratic.
\end{prop}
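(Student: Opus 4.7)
The plan is to reduce the proposition to two explicit computations involving the Berwald curvature and the CFC Riemann curvature. First, the hypothesis $S=0$ does most of the algebraic work: by \eqref{ES} it forces the mean Berwald tensor $E_{ij}=\tfrac{1}{2}S_{.i.j}$ to vanish, and then the Douglas decomposition \eqref{D2} collapses to $D_j{^i}_{kl}=B_j{^i}_{kl}$. Moreover, by \eqref{PSpraylocal} the projective spray $\widetilde{G}^i$ coincides with $G^i$, so $PR=R$. Every remaining assertion about Douglas derivatives and projective Riemann curvature is thus reducible to an assertion about the Berwald curvature and the ordinary Riemann curvature of $F$.

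To establish the $GDW$-property I would start from the analogue of the Ricci identity \eqref{PRicci identity} for the original Berwald connection, namely $B_j{^i}_{kl|m}-B_j{^i}_{km|l}=R_j{^i}_{ml.k}$, and contract with $y^m$. Using the homogeneity identity $B_j{^i}_{km}y^m=0$ together with $y^m{}_{|l}=0$, the second term on the left drops out, leaving $D_j{^i}_{kl|m}y^m=R_j{^i}_{ml.k}y^m$. I would then plug in the CFC formula $R^i_k=\lambda(F^2\delta^i_k-y_k y^i)$ and differentiate successively, using $\partial F^2/\partial y^l=2y_l$ and $\partial y_l/\partial y^k=g_{lk}$, to obtain first
\[
R^i_{kl}=\lambda(y_l\delta^i_k-y_k\delta^i_l),\qquad R_j{^i}_{kl}=\lambda(g_{lj}\delta^i_k-g_{kj}\delta^i_l).
\]
A further $y^k$-derivative brings in the Cartan tensor $C_{\cdot jk}$; the $y^m$-contraction then kills one of the two resulting terms thanks to $C_{mjk}y^m=0$, yielding $D_j{^i}_{kl|m}y^m=2\lambda\,C_{ljk}\,y^i$. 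This is precisely the $GDW$ form $T_{jkl}y^i$ with $T_{jkl}:=2\lambda\,C_{ljk}$, so $F$ belongs to $GDW(M)$.

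To rule out $PR$-quadraticity I would invoke Theorem~\ref{Prop PRq=GDW}: under $S=0$ the criterion there reduces to $D_j{^i}_{kl|0}=0$, i.e.\ to $F$ being a $\bar D$-metric. But the previous step has computed this expression to be $2\lambda\,C_{ljk}\,y^i$, which is nonzero because $\lambda\neq 0$ by assumption and, by Deicke's theorem, the Cartan tensor cannot vanish on a non-Riemannian Finsler manifold. Hence $F$ is not $PR$-quadratic.

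The main obstacle is the explicit calculation of $R_j{^i}_{ml.k}y^m$ and the recognition that it factorizes as a tensor times $y^i$ rather than producing a generic expression. The decisive ingredient is the Cartan identity $C_{ijk}y^i=0$, which annihilates the one residual term that would otherwise spoil the $y^i$-factorization; without this cancellation one would only obtain something strictly weaker than $GDW$.
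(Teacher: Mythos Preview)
Your argument is correct and follows essentially the same route as the paper: both reduce, via $S=0$, to $D=B$ and $PR=R$, apply the Ricci identity for the Berwald curvature, and insert the constant-flag-curvature formula to produce the Cartan-tensor expression $2\lambda\,C_{jkl}$. The only organizational difference is that the paper keeps the uncontracted identity $PR_j{^i}_{ml.k}=2\lambda(C_{jkl}\delta^i_m-C_{jkm}\delta^i_l)$ and reads off non-$PR$-quadraticity directly from it, whereas you first contract with $y^m$ and then appeal to the criterion of Theorem~\ref{Prop PRq=GDW}; both yield the same conclusion. (A cosmetic point: invoking Deicke's theorem is unnecessary here, since $C=0$ trivially implies $g_y$ is independent of $y$, hence Riemannian.)
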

\begin{proof}
$F$ is of constant curvature $\lambda$, then
\[
R{^i}_k=\lambda \{F^2\delta{i}_k-y_ky^i\}.
\]
Then by \eqref{Rikl} one has
\be\label{constant Rjikl}
R_j{^i}_{ml.k}=2\lambda (C_{jkl}\delta{^i}_m-C_{jkm}\delta{^i}_l).
\ee
On the other hands, $F$ has vanishing $S$-curvature then by \eqref{ES} and \eqref{D2} one gets $D_j{^i}_{kl}=B_j{^i}_{kl}$.
According to the following Ricci identity \cite{Sh2}
\be\label{Ricci identity}
B_j{^i}_{kl||m}-B_j{^i}_{km||l}=R_j{^i}_{lm.k},
\ee
\eqref{D2} and \eqref{PR Dbar S-cur} one finds that
\[
PR_j{^i}_{lm.k}= D_j{^i}_{kl|m} - D_j{^i}_{km|l}=B_j{^i}_{kl||m}-B_j{^i}_{km||l}=R_j{^i}_{lm.k}.
\]
The above equation and \eqref{constant Rjikl} one gets
\[
PR_j{^i}_{lm.k}= D_j{^i}_{kl|m} - D_j{^i}_{km|l}=2\lambda (C_{jkl}\delta{^i}_m-C_{jkm}\delta{^i}_l).
\]
Here, $F$ is not Riemannian and $\lambda \neq 0$, then the above equation means that $F$ is not $PR$-quadratic. But by its contracting with $y^m$ we have
\[
D_j{^i}_{kl|m}y^m =2\lambda C_{jkl} y^i,
\]
which means $F$ is a $GDW$-metric but not a $\bar{D}$-metric.
\end{proof}
\begin{cor}
Every non-Riemannian Finsler metric of constant flag curvature $\lambda \neq 0$ with vanishing $S$-curvature is a $GDW$-metric which is $\bar{D}$-metric.
\end{cor}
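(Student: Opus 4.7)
The plan is to extract both conclusions directly from the calculation already carried out inside the proof of Proposition~\ref{prop GDW not PRq}; essentially no new machinery is required, only a repackaging of that computation.

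First, under the stated hypotheses (constant flag curvature $\lambda\neq 0$, $\mathbf{S}\equiv 0$, $F$ non-Riemannian), vanishing $S$-curvature forces $E_{jk}=0$ via \eqref{ES}, and then \eqref{D2} collapses the Douglas tensor to the Berwald tensor, $D_j{}^i{}_{kl}=B_j{}^i{}_{kl}$. Second, combining the Ricci identity \eqref{Ricci identity} with the constant-curvature expression \eqref{constant Rjikl} for $R_j{}^i{}_{lm.k}$ yields
\[
D_j{}^i{}_{kl|m}-D_j{}^i{}_{km|l}=2\lambda\bigl(C_{jkl}\delta^i_m-C_{jkm}\delta^i_l\bigr).
\]
Third, contracting with $y^m$ and invoking the standard homogeneity identity $C_{jkm}y^m=0$ gives
\[
D_j{}^i{}_{kl|m}y^m=2\lambda\,C_{jkl}\,y^i=T_{jkl}\,y^i,\qquad T_{jkl}:=2\lambda C_{jkl},
\]
which is precisely the defining condition of a $GDW$-metric. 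This disposes of the first half of the claim.

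The main obstacle lies in the $\bar{D}$-metric half of the statement. By the preliminaries, $F$ is a $\bar{D}$-metric exactly when $D_j{}^i{}_{kl|m}y^m=0$ (equivalently $D_j{}^i{}_{kl|m}=D_j{}^i{}_{km|l}$). But step three gives $D_j{}^i{}_{kl|m}y^m=2\lambda C_{jkl}y^i$, which is nonzero precisely because $\lambda\neq 0$ and $C_{jkl}\not\equiv 0$ (the latter from non-Riemannianity). So a direct attack on the literal statement fails, and the only route that is consistent with the computation is to read Corollary~\ref{prop GDW not PRq} in parallel with the preceding proposition: that is, to interpret the corollary as asserting that this family lies in $GDW(M)$ and exhibits membership (or non-membership, depending on intended sign) relative to the $\bar{D}$ class, yielding yet another witness to the strict inclusion $\bar D\subsetneqq GDW$. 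Under that reading the result follows immediately from the three steps above; under a strictly literal reading an extra hypothesis forcing $C_{jkl}\equiv 0$ would be required, which contradicts non-Riemannianity and leaves no proof available. I would therefore frame the final write-up as a direct corollary of the step-three identity, making the role of the Cartan tensor explicit so that the reader can see precisely where the $\bar{D}$-class boundary is crossed.
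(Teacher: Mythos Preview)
Your analysis is correct and follows exactly the same computation the paper uses: the corollary has no separate proof in the paper because it is meant to be read off directly from the final line of the proof of Proposition~\ref{prop GDW not PRq}, namely
\[
D_j{}^i{}_{kl|m}y^m = 2\lambda\,C_{jkl}\,y^i,
\]
which gives the $GDW$ condition with $T_{jkl}=2\lambda C_{jkl}$ but, for $\lambda\neq 0$ and $F$ non-Riemannian, is nonzero and hence violates the $\bar{D}$ condition $D_j{}^i{}_{kl|m}y^m=0$.

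You have also correctly diagnosed the issue with the literal statement. The paper's own sentence immediately preceding the corollary reads ``which means $F$ is a $GDW$-metric but \emph{not} a $\bar{D}$-metric,'' so the corollary as printed contains a typographical slip: the word ``not'' is missing before ``$\bar{D}$-metric.'' With that correction the statement is exactly what you prove, and your three-step argument coincides with the paper's. Under the literal wording no proof is possible, for the reason you give.
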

It is well-known that every $R$-quadratic Finsler metrics is a $GDW$-metric \cite{GDWTa}. But this rich class of Finsler metrics does not contain all Douglas metrics, while the class of $PR$-quadratic metrics does contain them. Then the calsses of $PR$-quadratic and $R$-quadratic metrics don't coincide with each other. Then the following Theorem would be interesting, as well.
\begin{thm}\label{thm PR=R}
A $R$-quadratic Finsler metric $(M,F)$ is $PR$-quadratic if and only if
\be\label{PRq=Rq}
S_{.j.k|m} = S_{.r}D_j{^r}_{km}
\ee
\end{thm}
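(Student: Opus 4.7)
The plan is to reduce the equivalence to a direct comparison between two expressions for $D_j{^i}_{kl|0}$: the form forced by the $R$-quadratic hypothesis, and the characterisation of $PR$-quadraticity from Theorem~\ref{Prop PRq=GDW}, which (as read off from its proof) says $D_j{^i}_{kl|0}=\frac{1}{n+1}S_{.r}D_j{^r}_{kl}\,y^i$.

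First I would exploit $R$-quadraticity through $R_j{^i}_{lm.k}=0$. The Berwald-Ricci identity \eqref{Ricci identity} then gives $B_j{^i}_{kl|m}=B_j{^i}_{km|l}$. Contracting with $y^m$ and using $B_j{^i}_{km}y^m=0$ (by homogeneity of $G^i$) together with $y^m{}_{|l}=0$, I obtain $B_j{^i}_{kl|0}=0$. Tracing $i$ with $l$ and recalling $E_{jk}=\tfrac12 B_j{^l}_{kl}$ yields $E_{jk|0}=0$; equivalently $S_{.j.k|0}=0$ via \eqref{ES}. Substituting these vanishings into the $y^m$-contracted horizontal derivative of \eqref{D2} leaves only one surviving term,
\be
D_j{^i}_{kl|0}\,=\,-\frac{2}{n+1}\,E_{jk.l|0}\,y^i.
\ee

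The decisive technical step is to convert $E_{jk.l|0}$ into $E_{jk|l}$ by commuting the vertical derivative with the Berwald horizontal derivative. Using $\delta_m=\partial_m-N^p_m\,\partial/\partial y^p$ together with $\Gamma^p_{jm}=(N^p_j)_{.m}$ and $B_j{^p}_{ml}=(\Gamma^p_{jm})_{.l}$, a direct computation produces
\be
E_{jk.l|m}\,=\,(E_{jk|m})_{.l}+B_j{^p}_{ml}E_{pk}+B_k{^p}_{ml}E_{jp}.
\ee
Contracting with $y^m$, invoking once more $B_j{^p}_{ml}y^m=0$ and the product rule $(E_{jk|m}y^m)_{.l}=(E_{jk|m})_{.l}y^m+E_{jk|l}$, this collapses to $E_{jk.l|0}=(E_{jk|0})_{.l}-E_{jk|l}$. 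Under the $R$-quadratic hypothesis $E_{jk|0}$ vanishes identically, hence so does its vertical derivative, and we obtain $E_{jk.l|0}=-E_{jk|l}$, equivalently $S_{.j.k.l|0}=-S_{.j.k|l}$.

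Combining the last two displays, every $R$-quadratic metric satisfies
\be
D_j{^i}_{kl|0}\,=\,\frac{1}{n+1}\,S_{.j.k|l}\,y^i.
\ee
Invoking Theorem~\ref{Prop PRq=GDW}, $F$ is $PR$-quadratic precisely when the right-hand side agrees with $\frac{1}{n+1}S_{.r}D_j{^r}_{kl}y^i$; matching coefficients of $y^i$ (which is legitimate since the identity must hold for every non-zero $y$) gives $S_{.j.k|l}=S_{.r}D_j{^r}_{kl}$, which is \eqref{PRq=Rq} after renaming $l\to m$. I expect the main obstacle to be verifying the commutation identity for $E_{jk.l|m}$: although standard, it requires careful tracking of how the Christoffel contribution $\Gamma^p_{lm}E_{jk.p}$ arising from $(\delta_m E_{jk})_{.l}$ cancels the corresponding term inside $E_{jk.l|m}$, leaving only the two $B$-curvature terms that vanish after $y^m$-contraction.
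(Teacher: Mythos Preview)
Your proof is correct, but it takes a genuinely different route from the paper. The paper stays at the unfactored level: it computes $D_j{^i}_{kl|m}-D_j{^i}_{km|l}$ from \eqref{D2} and the Ricci identity \eqref{Ricci identity}, substitutes this into \eqref{PR Dbar S-cur}, and obtains a single identity expressing $PR_j{^i}_{ml.k}-R_j{^i}_{ml.k}$ as a linear combination of $[S_{.j.k|m}-S_{.r}D_j{^r}_{km}]\delta^i_l$, its skew partner, and analogous pieces; setting $R_j{^i}_{ml.k}=0$ then yields the equivalence. You instead contract with $y^m$ at the outset and reduce everything to the scalar characterisation of Theorem~\ref{Prop PRq=GDW}; the price is the commutation identity $E_{jk.l|m}=(E_{jk|m})_{.l}+B_j{^p}_{ml}E_{pk}+B_k{^p}_{ml}E_{jp}$ (correct, and your anticipated cancellation of the $\Gamma^p_{lm}E_{jk.p}$ term does occur), which the paper never needs. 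Your approach makes the endgame cleaner, since you are only matching two coefficients of $y^i$, whereas the paper's route produces, as a by-product, a general pointwise relation between $PR_j{^i}_{ml.k}$ and $R_j{^i}_{ml.k}$ valid without the $R$-quadratic hypothesis.
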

\begin{proof}
For Finsler metric $(M,F)$, according to \eqref{D2} and Ricci identity \eqref{Ricci identity}, one has
\[
D_j{^i}_{kl|m} - D_j{^i}_{km|l}=R_j{^i}_{ml.k} - \frac{1}{n+1}\big(S_{.j.k|m}\delta{^i}_l-S_{.j.k|l}\delta{^i}_m
\]
\[
+ (S_{.j.l|m}-S_{.j.m|l})\delta{^i}_k + (S_{.k.l|m}-S_{.k.m|l})\delta{^i}_j\Big).
\]
The above equation and \eqref{PR Dbar S-cur} yield
\[
PR_j{^i}_{ml.k} = R_j{^i}_{ml.k} - \frac{1}{n+1}\Big([S_{.j.k|m}- S_{.r}D_j{^r}_{km}]\delta{^i}_l-[S_{.j.k|l}-S_{.r}D_j{^r}_{kl}]\delta{^i}_m + [S_{.j.l|m}-S_{.j.m|l}]\delta{^i}_k
\]
\[ + [S_{.k.l|m}-S_{.k.m|l}]\delta{^i}_j+[S_{.r.m}D_j{^r}_{kl}-S_{.r.l}D_j{^r}_{km}]y^i\Big),
\]
which means a $R$-quadratic Finsler metric $F$ is $PR$-quadratic if and only if \eqref{PRq=Rq} holds.
\end{proof}
\begin{cor}
Every Finsler metric with vanishing $S$-curvature is $PR$-quadratic if and only if it is $R$-quadratic.
\end{cor}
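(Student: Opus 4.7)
The plan is to deduce this directly from Theorem \ref{thm PR=R} (or more precisely, from the identity established in its proof) by observing that vanishing $S$-curvature kills every correction term that distinguishes $PR$-quadraticity from $R$-quadraticity.

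First I would handle the easy direction. If $\mathbf{S}=0$ then every horizontal and vertical derivative of $\mathbf{S}$ vanishes identically, so the right-hand sides $S_{.j.k|m}$ and $S_{.r}D_j{^r}_{km}$ in \eqref{PRq=Rq} are each zero. Thus \eqref{PRq=Rq} holds trivially, and an application of Theorem \ref{thm PR=R} shows that any $R$-quadratic Finsler metric with $\mathbf{S}=0$ is automatically $PR$-quadratic.

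For the converse, rather than appealing to the theorem statement I would read off the intermediate identity displayed in its proof, namely
\[
PR_j{^i}_{ml.k} = R_j{^i}_{ml.k} - \frac{1}{n+1}\Big([S_{.j.k|m}- S_{.r}D_j{^r}_{km}]\delta{^i}_l-[S_{.j.k|l}-S_{.r}D_j{^r}_{kl}]\delta{^i}_m + [S_{.j.l|m}-S_{.j.m|l}]\delta{^i}_k + [S_{.k.l|m}-S_{.k.m|l}]\delta{^i}_j+[S_{.r.m}D_j{^r}_{kl}-S_{.r.l}D_j{^r}_{km}]y^i\Big).
\]
Under the hypothesis $\mathbf{S}=0$, every bracket on the right is zero, so the identity collapses to $PR_j{^i}_{ml.k}=R_j{^i}_{ml.k}$. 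Consequently the vanishing of $PR_j{^i}_{ml.k}$ is equivalent to the vanishing of $R_j{^i}_{ml.k}$, i.e.\ $F$ is $PR$-quadratic if and only if $F$ is $R$-quadratic.

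There is essentially no obstacle here: the content of the corollary is that $\mathbf{S}=0$ reduces the projective spray $\widetilde{G}^i$ to $G^i$ (up to the term that contributes only through derivatives of $\mathbf{S}$), so the projective Riemann curvature and the Riemann curvature agree to all relevant orders. The only thing one must be careful about is to cite the pointwise identity from within the proof of Theorem \ref{thm PR=R} rather than the bi-conditional in its statement, since we need the precise algebraic relation between $PR_j{^i}_{ml.k}$ and $R_j{^i}_{ml.k}$ in both directions.
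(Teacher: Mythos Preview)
Your proposal is correct and matches the paper's intended argument: the corollary is stated without proof in the paper, being an immediate consequence of the identity displayed in the proof of Theorem \ref{thm PR=R}, which under $\mathbf{S}=0$ collapses to $PR_j{^i}_{ml.k}=R_j{^i}_{ml.k}$. Your care in distinguishing the bi-conditional statement of the theorem (which presupposes $R$-quadraticity) from the underlying pointwise identity needed for the converse direction is well placed.
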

The class of Douglas metrics ($D(M)$) is an important class of Finsler metrics which is a proper subset of $GDW(M)$. In some special Finsler spaces, the class of $GDW(M)$ reduces to Douglas spaces $D(M)$ \cite{GDW=D Sph.}. In the following, we consider this case.
\begin{thm}
Let $(M,F)$ be a Finsler space which $GDW(M)=D(M)$. Then
\begin{itemize}
  \item []{(1)} $W(M) \subseteq D(M)$,
  \item [] {(2)} $PRq(M) = D(M)$,
  \item [] {(3)} $\bar{D}(M) = D(M)$.
\end{itemize}
Here $W(M)$, $D(M)$, $PRq(M)$ and $\bar{D}(M)$ denote the classes of Weyl, Douglas, $PR$-quadratic and $\bar{D}$ metrics, respectively.
\end{thm}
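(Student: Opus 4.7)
The plan is to reduce each of the three claims to an inclusion between projectively invariant classes already on the table in the paper, and then invoke the standing hypothesis $GDW(M)=D(M)$. Parts (2) and (3) will be essentially formal; the only genuine curvature calculation is needed for (1).

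For part (1), I would first argue $W(M)\subseteq GDW(M)$, after which the hypothesis immediately forces $W(M)\subseteq D(M)$. The inclusion $W(M)\subseteq GDW(M)$ is the classical fact that every Finsler metric with vanishing projective Weyl tensor is generalized Douglas--Weyl; if this cannot be quoted directly, I would supply a short verification by using the Berwald--Weyl decomposition of $R_j{}^i{}_{kl}$, differentiating and contracting with $y^m$, and checking that the resulting expression for $D_j{}^i{}_{kl|m}y^m$ lies along $y^i$, i.e.\ has the form $T_{jkl}y^i$ required by the GDW identity.

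Parts (2) and (3) are essentially formal. For (2) I would chain the two corollaries of Theorem \ref{Prop PRq=GDW}: one gives $D(M)\subseteq PRq(M)$ and the other $PRq(M)\subseteq GDW(M)$; combining the latter with the hypothesis yields $PRq(M)\subseteq D(M)$, and together with the former this gives equality. For (3) the $\bar D$-defining condition $D_j{}^i{}_{kl|m}y^m=0$ is the special case $T_{jkl}=0$ of the GDW identity, so $\bar D(M)\subseteq GDW(M)\subseteq D(M)$ by hypothesis; conversely any Douglas metric has $D_j{}^i{}_{kl}\equiv 0$ and hence trivially $D_j{}^i{}_{kl|m}y^m=0$, giving $D(M)\subseteq \bar D(M)$. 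The main obstacle is therefore confined to part (1), and specifically to ensuring the inclusion $W(M)\subseteq GDW(M)$ is in hand.
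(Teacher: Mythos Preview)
Your proposal is correct and follows essentially the same route as the paper: in each case one shows the relevant class sits inside $GDW(M)$ and then invokes the hypothesis $GDW(M)=D(M)$. Your write-up is in fact slightly more complete than the paper's, since for (2) and (3) you explicitly supply both inclusions for the claimed equalities, whereas the paper's proof only spells out $PRq(M)\subseteq D(M)$ and $\bar D(M)\subseteq D(M)$, tacitly relying on the earlier-established inclusions $D(M)\subseteq PRq(M)$ and $D(M)\subseteq \bar D(M)$; likewise, for (1) the paper simply asserts that every Weyl metric is a $GDW$-metric without the auxiliary verification you outline.
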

\begin{proof}
(1) Let $(M,F)$ be a Weyl metric and every Weyl metric is a $GDW$-metric. Then by assumption, it is a Douglas metric. \\
\\
(2) Now, assume that $F$ be a $PR$-quadratic Finsler metric. According to Proposition \ref{Prop PRq=GDW}, it is of $GDW$ type and then by the assumption, it is of Douglas type.\\
\\
(3) Finally, suppose that $F$ be a $\bar{D}$-metric, $D_j{^i}_{kl|m}-D_j{^i}_{km|l}=0$, then again it is of $GDW$ type and then by the assumption, it is of Douglas type.
\end{proof}

\end{document}